\documentclass[preprint,authoryear,12pt]{article}
\usepackage{newlfont}
\usepackage{a4wide}
\usepackage{amsmath}
\usepackage{inputenc}
\usepackage{amsfonts}
\usepackage{amssymb}
\usepackage{amsthm}
\usepackage{newlfont}
\usepackage{graphicx}
\usepackage{subfigure}
\usepackage{mathrsfs}
\usepackage[authoryear]{natbib}
\usepackage{color}

\usepackage{comment}

\usepackage[running]{lineno}

\newtheorem{theorem}{Theorem}

\newtheorem{proposition}{Proposition}

\newtheorem{definition}{Definition\rm}
\newtheorem{remark}{Remark}

\theoremstyle{definition}

\newcommand{\PP}{{\mathcal P}}
\renewcommand{\AA}{{\mathcal A}}

\newcommand{\IR}{{\mathrm{I\!R}}}

\newcommand{\IN}{{\mathrm{I\!N}}}

\newcommand{\HH}{{\mathcal H}}

\renewcommand{\SS}{{\mathscr S}}
\newcommand{\CC}{{\mathscr C}}

\newcommand{\cob}{\overline{\mathop{\mathrm{co}}}}

\begin{document}


\title{The Dunford-Choquet integral\footnote{Last updated: November 6, 2019}}
\author{Y. Askoura}

\maketitle

{LEMMA, Universit\'e Paris II, 
4 rue Blaise Desgoffe, 75006 Paris.}

{youcef.askoura@u-paris2.fr}

\begin{abstract} We define and investigate some preliminary properties of an integral of vector valued functions and its canonically associated indefinite capacity. More precisely, we adapt the Dunford integral to the Choquet integration setting for Banach-valued functions. 
\end{abstract}

Keywords :  Choquet integral, vector valued functions,  Dunford integral, integration with respect to capacities.

MSC : Primary 28B05, Secondary 46G10, 47H05.


\section{Introduction}

We try, in this note, to provide meaning for the integral of a Banach valued function relatively to a capacity. We follow \citet{CHO55,DEL71}, and many others, integrating real valued functions relatively to capacities and establish a  Dunford type integral for vector valued functions. The intuition is somewhat disturbed by the fact that an integral with respect to a capacity is not generally additive as long as the underlying capacity is not. Thus, the integral of a vector valued function with respect to a capacity cannot be defined canonically as an element of a bidual, like the  Dunford integral in case of measures. Hence, given a measurable space $(\Omega,\AA)$, we set simply, under some additional conditions making the entities meaningful, the  Dunford-Choquet integral of a function $f$ defined from $(\Omega,\AA)$, endowed with a capacity $\nu$, into a Banach space $X$ on every $E\in \AA$ as a function~: 
$$
\begin{array}{rl}
I_f(E) :&X^*\rightarrow \IR\\
&x^*\mapsto I_f(E)x^*=\int_\Omega x^* f \chi_E d\nu,
\end{array}
$$
where, $X^*$ is the dual of $X$ and $\chi_E$ is the characteristic function of $E$. We establish some properties showing that $I_f$ behave well and as an indefinite integral, it defines a capacity from $\AA$ to the set of positively homogeneous functions on $X^*$ inheriting some properties from the considered capacity $\nu$.   

\section{Integration of vector valued functions}
Let $\Omega$ be an arbitrary set and $\SS$ a collection of subsets of $\Omega$ stable under union and intersection. A capacity on $(\Omega,\SS)$ is a set function $\nu : \SS\rightarrow \IR_+$ satisfying $\nu(\emptyset)=0$. It is said to be 
\begin{itemize}
\item monotonic : iff for all $A,B\in \SS$, with $A\subset B$, $\nu(A)\leq \nu(B)$.
\item Sub-additive : iff for all $A,B\in \SS$, $A\cap B=\emptyset$,  $\nu(A\cup B)\leq \nu(A)+\nu(B)$.
\item Sub-modular : iff for all $A,B\in \SS$,  $\nu(A\cup B)+\nu(A\cap B)\leq \nu(A)+\nu(B)$.
\item Super-additive : iff for all $A,B\in \SS$, $A\cap B=\emptyset$,  $\nu(A\cup B)\geq \nu(A)+\nu(B)$.
\item Super-modular : iff for all $A,B\in \SS$,  $\nu(A\cup B)+\nu(A\cap B)\geq \nu(A)+\nu(B)$.
\item Continuous from below : iff for every sequence $(A_n)_{n\in \IN}$ of subsets of $\SS$ such that $A_n\subset A_{n+1}$ for every $n\in \IN$ (increasing) and $\cup_{n\in \IN} A_n=A\in \SS$, $\underset{n \rightarrow +\infty}{\lim} \nu(A_n)=\nu(A)$. 
\item Continuous from above : iff for every sequence $(A_n)_{n\in \IN}$ of subsets of $\SS$ such that $A_n\supset A_{n+1}$ for every $n\in \IN$ (decreasing) and $\cap_{n\in \IN} A_n=A\in \SS$, $\underset{n \rightarrow +\infty}{\lim} \nu(A_n)=\nu(A)$. 

\item Continuous : iff it is both continuous from above and from below.
\end{itemize}

For a sequence $(A_n)_{n\in \IN}$ of subsets of $\SS$ such that $A_n\subset A_{n+1}$ for every $n\in \IN$ and $\cup_{n\in \IN} A_n=A$, we write in the sequel, $A_n\uparrow A$, and for a sequence $(A_n)_{n\in \IN}$ of subsets of $\SS$ such that $A_n\supset A_{n+1}$ for every $n\in \IN$ and $\cap_{n\in \IN} A_n=A$, we write  $A_n\downarrow A$.

\medskip
From now on consider a measurable space $(\Omega,\AA)$, where $\AA$ is a $\sigma-$algebra, and let $\nu$ be  a finite monotonic capacity defined on $\AA$, that is $\nu:\AA\rightarrow \IR_{+}, \nu(\emptyset)=0$ and $\nu(A)\leq \nu(B)$ whenever $A,B\in \AA$ and $A\subset B$. By \emph{the Choquet integral} of a measurable function $h :\Omega\rightarrow \IR$, we mean the usual asymmetric Choquet integral of $h$~: 

$$ \int_\Omega hd\nu=\int_0^{+\infty} \nu(\{\omega\in \Omega : h(\omega)>t\}) d\lambda(t) +\int^0_{-\infty}\left[ \nu(\{\omega\in \Omega : h(\omega)>t\}) -\nu(\Omega)\right]d\lambda(t).$$

where $\lambda$ is the Lebesgue measure on $\IR$. Note that since the functions within the integral symbole are sign constant, the improper Riemann and Lebesgue integrals coincide. The function $h$ will be said, below, Choquet integrable or simply $\nu$-integrable, if all the integrals above are finite.

\medskip
Let $X$ be a Banach space. Denote by $X^*$ the dual of $X$ and by $X^{**}$ its bidual. Let $E\in \AA$ and $f:\Omega\rightarrow X$. Then, $f$ is said to be weakly measurable iff $x^*f$ is measurable, for every $x^*\in X^*$.  The function $f$ is said to be weakly integrable on $E$, iff it is weakly measurable and $\int_\Omega x^*f \chi_Ed\nu$ exists for every $x^*\in X^*$. It is said weakly integrable iff it is weakly integrable on every $E\in \AA$. We state, in a canonical way, the following~:

\begin{definition}\label{DDC} Let $E\in \AA$  and $f:\Omega \rightarrow X$ be a weakly integrable function. The Dunford-Choquet integral of $f$ on $E$ is the function  $I_f(E) :X^*\rightarrow \IR$ defined at every $x^*\in X^*$ by 

$$ I_f(E)x^*= \int_\Omega x^* f\chi_E d\nu$$
A weakly integrable function $f:\Omega\rightarrow X$ is said to be Dunford-Choquet integrable iff $I_f(E) $ exists for every $E\in \AA$.
\end{definition}

Let us make some immediate remarks. 

\begin{itemize} 
\item[1)] If $\nu$ is a measure the function $I_f(E)$ is identical to the classical Dunford integral of $f$ on $E$. In this case,  it is known that $I_f(E)\in X^{**}$.

\item[2)] As defined above, it is not meaningful to define analogously the generalized Pettis integral for capacities. Indeed, if $\nu$ is not additive, the integral with respect to $\nu$ fails, generally, to be linear. Hence, $I_f(E)$ has not to belong to $X^{**}$ or to $X$ as it is the case for measures. 

\item[3)] Consider a particular case of a super-modular monotonic capacity $\nu$. In this case \citep{SCH86,DEN94}, there is a set of measures $K$ such that the integral on $E\in \AA$ with respect to $\nu$ of a real valued function $f :\Omega\rightarrow \IR$ can be obtained by $$ \int_\Omega f \chi_E d\nu=\underset{p\in K}{\inf} \int_\Omega f\chi_E dp=\underset{p\in K}{\inf} \int_E f dp.$$ 
Here, we set $X=X^*=X^{**}=\IR$. Clearly, for every $p\in K$, $\int_E f d p$ defines an element $x^{**}_E\in X^{**}$ by $ x^{**}_E(x^*)=\int_Ex^* f d p$. Hence, $I_f(E)(\alpha)=min \{ \int_E \alpha f d p : p\in K\}$. For instance, if $K=\{p,q\}$, $\int_E f d p=1,\int_E f d q=2$, $I_f(E)$ is a function from $\IR$ into $\IR$ defined by $I_f(E)(\alpha)=\alpha$ for $\alpha>0$ and $I_f(E)(\alpha)=2\alpha$ if $\alpha\leq 0$.

\item[4)] It results straightforwardly, from the definition, that $I_f(E)$ is positively homogenous like the Choquet integral. 
\end{itemize}

\begin{theorem} \label{DCBT} Let $E\in \AA$. Assume that $\nu$ is sub-additive and continuous from above. Let $f:\Omega\rightarrow X$ be weakly measurable and satisfies~:

\begin{equation}\label{DCB}
\int_\Omega \|f\|\chi_E d\nu <+\infty
\end{equation}
Then,  $I_{f}(E)$ is continuous for the weak* topology. 

\end{theorem}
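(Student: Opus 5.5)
The plan is to reduce weak$^*$ continuity of $I_f(E)$ to a dominated convergence theorem for the Choquet integral, proved by hand from sub-additivity and continuity from above. Write $h_{x^*}=x^*f\chi_E$, which is measurable by weak measurability of $f$.

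\textbf{Step 1 (domination).} Take a sequence $x_n^*\to x^*$ weak$^*$ in $X^*$. By the uniform boundedness principle, $M=\sup_n\|x_n^*\|<+\infty$. Put $h_n=h_{x_n^*}$, $h=h_{x^*}$ and $g=M\|f\|\chi_E$. Then $|h_n|\le g$, $|h|\le g$, and $h_n\to h$ pointwise on $\Omega$. By (\ref{DCB}) and positive homogeneity of the Choquet integral, $\int_0^{+\infty}\nu(\{g>t\})\,d\lambda(t)<+\infty$. Since $\nu(\{g\ge s\})=\nu(\{g>s\})$ except for countably many $s$, the same holds with $\{g\ge t\}$ in place of $\{g>t\}$.

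\textbf{Step 2 (convergence of the distribution functions).} Fix $\varepsilon>0$. Let $D_n=\bigcup_{k\ge n}\{|h_k-h|>\varepsilon\}$. This sequence is decreasing in $n$, and $\bigcap_n D_n=\emptyset$ by pointwise convergence, so continuity from above gives $\nu(D_n)\downarrow 0$. From the inclusions
\[
\{h_n>t\}\subset\{h>t-\varepsilon\}\cup D_n,\qquad \{h>t+\varepsilon\}\subset\{h_n>t\}\cup D_n,
\]
sub-adaptivity in the form stated (union of two sets) and monotonicity give
\[
\nu(\{h>t+\varepsilon\})-\nu(D_n)\le \nu(\{h_n>t\})\le \nu(\{h>t-\varepsilon\})+\nu(D_n).
\]
The sub-additivity in the paper is stated for disjoint sets only. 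To apply it I will first replace the unions by disjoint ones, for instance $\{h>t-\varepsilon\}\cup(D_n\setminus\{h>t-\varepsilon\})$, and then use monotonicity.

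The function $t\mapsto\nu(\{h>t\})$ is monotone, so it is continuous off a countable set. Letting $n\to\infty$ and then $\varepsilon\to0$ yields $\nu(\{h_n>t\})\to\nu(\{h>t\})$ for $\lambda$-a.e.\ $t$.

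\textbf{Step 3 (dominating the integrands and concluding).} For $t>0$ the positive-part integrand satisfies $\nu(\{h_n>t\})\le\nu(\{g>t\})$, which is integrable by Step 1. For $t<0$, sub-additivity applied to $\Omega=\{h_n>t\}\cup\{h_n\le t\}$ gives
\[
0\le\nu(\Omega)-\nu(\{h_n>t\})\le\nu(\{h_n\le t\})\le\nu(\{g\ge -t\}),
\]
and the right-hand side is integrable on $(-\infty,0)$ by Step 1. Lebesgue's dominated convergence theorem applied to both parts of the Choquet integral then gives $I_f(E)x_n^*\to I_f(E)x^*$.

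\textbf{Main obstacle.} The argument above yields sequential weak$^*$ continuity. It therefore also yields weak$^*$ continuity on bounded subsets of $X^*$ whenever these are metrizable, for instance when $X$ is separable. Upgrading to continuity for nets in the full weak$^*$ topology is the delicate point. For nets, uniform boundedness is unavailable, and continuity from above only controls countable decreasing families. Even for a measure $\nu$, weak$^*$ continuity on all of $X^*$ would force the Dunford integral to lie in $X$. So I would first try to establish the sequential, or bounded-set, version, and then check whether the intended meaning of the statement is this one. If full net continuity really is intended, I would look for an extra hypothesis, such as separability of $X$ combined with a Krein--Šmulian type argument on bounded sets.
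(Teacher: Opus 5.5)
Steps 1--3 correctly prove that $x_n^*\to x^*$ weak$^*$ implies $I_f(E)x_n^*\to I_f(E)x^*$. Your route differs somewhat from the paper's. The paper sandwiches $x_n^*f$ between the monotone envelopes $u_n=\sup_{k\ge n}x_k^*f\downarrow x^*f$ and $l_n=\inf_{k\ge n}x_k^*f\uparrow x^*f$. It first derives continuity from below of $\nu$ from sub-additivity and continuity from above. It then gets the $\limsup$ and $\liminf$ bounds from monotone convergence in $t$, an $\varepsilon$-shift, and the rule $\int_\Omega(h+\varepsilon)\,d\nu=\int_\Omega h\,d\nu+\varepsilon\nu(\Omega)$. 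You instead control the exceptional sets $D_n=\bigcup_{k\ge n}\{|h_k-h|>\varepsilon\}$, which needs only continuity from above. This gives $\lambda$-a.e.\ convergence of the distribution functions, and you finish with Lebesgue's theorem using the explicit majorants $\nu(\{g>t\})$ and $\nu(\{g\ge -t\})$. Your version is slightly more self-contained, since it avoids continuity from below and comonotonic additivity. The paper's is the Fatou-type proof of the same dominated convergence theorem for capacities.

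Your \emph{main obstacle} is genuine, and the paper does not get past it either. Its proof also handles only sequences, justified by the Eberlein--\v Smulian theorem. That theorem concerns weak compactness and does not make $(X^*,w^*)$ a sequential space. In fact the statement, read literally, is false even for $X=\ell^2$:
\begin{itemize}
\item[$\bullet$] Let $\Omega=\IN$, $\AA=\PP(\IN)$, $\nu(A)=\sup_{k\in A}2^{-k}$ and $\nu(\emptyset)=0$. This $\nu$ is sub-additive. It is continuous from above because only finitely many $k$ satisfy $2^{-k}\ge\eta$ for any $\eta>0$. Take $f(k)=e_k$ and $E=\Omega$, so $\int_\Omega\|f\|\,d\nu=1/2$.
\item[$\bullet$] Every weak$^*$ neighbourhood of $0$ in $X^*=\ell^2$ contains a subspace $N\neq\{0\}$ of finite codimension. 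By positive homogeneity, weak$^*$ continuity at $0$ would force $I_f(\Omega)=0$ on $N$.
\item[$\bullet$] Take $y\in N\setminus\{0\}$ with first nonzero coordinate $y_j$, made positive by replacing $y$ with $-y\in N$ if needed. For $t<0$, every set $\{k:y_k>t\}$ contains $1$, so the negative part of the Choquet integral vanishes.
\item[$\bullet$] For $0<t<y_j$ we have $\nu(\{k:y_k>t\})\ge 2^{-j}$. Hence $I_f(\Omega)y\ge 2^{-j}y_j>0$, a contradiction.
\end{itemize}

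So the correct content of the theorem is sequential weak$^*$ continuity. When $X$ is separable, this is the same as weak$^*$ continuity on bounded subsets of $X^*$, and it is exactly what you proved. The extra hypothesis you contemplate would not rescue the full statement. Krein--\v Smulian and Banach--Dieudonn\'e arguments rely on convexity: they upgrade bounded-set continuity to weak$^*$ continuity for linear functionals, i.e.\ in the measure case. The counterexample above already has $X$ separable.
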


\begin{proof}

Note first that if $\nu$ is sub-additive and continuous from above, then it is also continuous from below and then continuous. 
Indeed, let a sequence $(B_n)_{n\in \IN}$ in $\AA$ and $B\in \AA$ such that $B_n\uparrow B$, then $\nu(B_n)\leq \nu(B)=\nu([B\setminus B_n] \cup B_n)\leq \nu(B\setminus B_n)+\nu(B_n)$. Since $B\setminus B_n\downarrow \emptyset$, the result follows immediately.

From Eberlein-\v Smulian Theorem, we can use sequences to describe continuity. Let $(x_n^*)_{n\in \IN}$, be a sequence in $X^*$ converging weakly* to $x^*$. Hence, $x^*_n f$ converges to $x^* f$ pointwise. Set for every $n\in \IN,$
$$u_n=\underset{k\geq n}{\sup}\; x_k^*f\text{ and }l_n=\underset{k\geq n}{\inf}\; x_k^*f$$

Then, $(u_n)_{n\in \IN}$ and $(l_n)_{n\in \IN}$ are measurable, $(u_n)_{n\in \IN}$ is decreasing and converges pointwise to $x^*f$ and   $(l_n)_{n\in \IN}$ is increasing and converges pointwise to $x^*f$. From condition \eqref{DCB} both $(u_n)_{n\in \IN}$ and $(l_n)_{n\in \IN}$ are $\nu$-integrable on $E$, because for every $\omega\in E$ and every $n\in \IN$, $|u_n(\omega)|\leq \underset{k\in \IN}{\sup}|x^*_k f(\omega)|\leq \underset{k\in \IN}{\sup}\|x^*_k\|_{X^*} \|f(\omega)\|_X$ and since $x^*_n$ is weakly* convergent, it is bounded, then there is $M>0$ such that $|u_n(\omega)|\leq  M\|f(\omega)\|_X$, for every $\omega\in \Omega$. Hence, by the monotonicity of $\nu$ and its sub-additivity, $|\int_\Omega u_n \chi_E d\nu |\leq M \int_\Omega \|f\|_X \chi_E d\nu$ (\citet{DEN94}, corollary 6.6, p. 82). Note that this argument applies as well for $l_n,n\in \IN$. Let $\varepsilon>0$ and set for every $t\in \IR$, $A_t=\cap_{n\in \IN}\{\omega\in \Omega: u_n(\omega)\chi_E(\omega)>t\}$. Remark that, $\{\omega\in \Omega: x^* f(\omega)\chi_E(\omega)>t\}\subset A_t\subset \{\omega\in \Omega: x^* f(\omega)\chi_E(\omega)+\varepsilon>t\}$. In another hand, $\{\omega\in \Omega: l_n(\omega)\chi_E(\omega)>t\}\uparrow \{\omega\in \Omega: x^* f(\omega)\chi_E(\omega)>t\}$. This provides  $t\mapsto \nu(\{\omega\in \Omega: u_n(\omega)\chi_E(\omega)>t\})$ converges pointwise to $t\mapsto \nu(A_t)$ and $t\mapsto \nu(\{\omega\in \Omega: l_n(\omega)\chi_E(\omega)>t\})$ converges pointwise to $\nu\{\omega\in \Omega: x^* f(\omega)\chi_E(\omega)>t\}$. Also, $t\rightarrow \nu(A_t)$ is measurable as it is decreasing on $\IR$ and for every $t\in \IR$, $\nu(\{\omega\in \Omega: x^* f(\omega)\chi_E(\omega)>t\})\leq \nu( A_t) \leq \nu(\{\omega\in \Omega: x^* f(\omega)\chi_E(\omega)+\varepsilon>t\})$.  

Considering the monotonicity of $\nu$ again~: 
\begin{equation}\begin{array}{rl} 
\limsup \int_\Omega x_n^* f \chi_E d\nu &\leq \lim\int_\Omega u_n \chi_E d\nu\\
&\leq \lim \left(\int_0^{+\infty} \nu\{\omega\in \Omega :u_n(\omega) \chi_E(\omega)>t\} d\lambda (t)\right.\\
&\left.\;\; +\int^0_{-\infty}\left[ \nu(\{\omega\in \Omega : u_n(\omega)\chi_E(\omega)>t\}) -\nu(\Omega)\right]d\lambda(t)\right)\\
&\leq \int_{0}^{+\infty}\nu(\{\omega\in \Omega: x^* f(\omega)\chi_E(\omega)+\varepsilon>t\})d\lambda(t)\\
&\;\;+\int^0_{-\infty}\left[ \nu(\{\omega\in \Omega : x^*f(\omega)\chi_E(\omega)+\varepsilon>t\}) -\nu(\Omega)\right]d\lambda(t) \\
&\leq \int_\Omega x^*f \chi_E+\varepsilon d\nu.\\
&\leq \int_\Omega x^*f \chi_E d\nu +\varepsilon \nu(\Omega). 
\end{array}
\end{equation}

Where, the last inequality uses the commonotonic additivity of the integral with respect to a capacity. Since $\varepsilon$ is arbitrary, we conclude that 

\begin{equation}\label{LS}
\limsup \int_\Omega x_n^* f\chi_E d\nu  \leq \int_\Omega x^*f \chi_E d\nu
\end{equation}

In another hand,  
\begin{equation}\label{LI}
\begin{array}{rl}
\liminf \int_\Omega x_n^* f \chi_E d\nu &\geq \lim\int_\Omega l_n\chi_E d\nu \\
&\geq \lim \left(\int_0^{+\infty} \nu\{\omega\in \Omega :l_n(\omega)\chi_E(\omega)>t\} d\lambda (t)\right.\\
&\left.\;\; +\int^0_{-\infty}\left[ \nu(\{\omega\in \Omega : l_n(\omega)\chi_E(\omega)>t\}) -\nu(\Omega)\right]d\lambda(t)\right)\\
&\geq \int_0^{+\infty}\nu(\{\omega\in \Omega: x^* f(\omega)\chi_E(\omega)>t\})d\nu\\
&\;\;+\int^0_{-\infty}\left[ \nu(\{\omega\in \Omega : x^*f(\omega)\chi_E(\omega)>t\}) -\nu(\Omega)\right]d\lambda(t) \\
&\geq \int_\Omega x^*f\chi_E d\nu.

\end{array}
\end{equation}

The weak* continuity of $I_f(E)$ results from \eqref{LS} and \eqref{LI}.

\end{proof}

\begin{remark}
 The proof of the previous theorem can be performed using the dominated convergence theorem for capacities (\citet{DEN94}, Proposition 8.5, Proposition 8.8, and Theorem 8.9, p. 98-101). We provided above an alternative explicit proof. 
\end{remark}
\subsection{The indefinite Dunford-Choquet capacity}
Consider a Dunford-Choquet integrable function $f:\Omega\rightarrow X$ and denote by $I_f(\cdot)$ the indefinite Dunford-Choquet integral of $f$, that is for every $E$, $I_f(E) : X^*\rightarrow \IR$, defined at every $x^*\in X^*$ by $$I_f(E)x^*=\int_\Omega x^*f \chi_E d\nu.$$
 
Let us denote by $\HH_0$ the set of all positively homogeneous real valued functions defined on $X^*$. Endowed with the usual operations on spaces of real valued functions, addition of functions and scalar multiplication, $\HH_0$ is a real vector space. 

In the sequel $I_f$ is seen as a capacity from $\AA$ to $\HH_0$, even if in the absence of an ordering on $\HH_0$, some the properties of a capacity listed above are not meaningful.

A set $E\in \AA$ is said to be $\nu$-null if $\nu(Z\cap E)=0$, for every $Z\in \AA$. The $I_f$-null sets are defined similarly. The capacity $\nu$ is said to be null-additive iff for every $E,Z\in \AA$, such that $Z$ is $\nu$-null, $\nu(E\cup Z)=\nu(E)$. The $I_f$ null-additivity is defined similarly. 

\begin{proposition} The following holds~:
\begin{itemize}
\item[1)]If $E\in \AA$ is $\nu$-null, then $E$ is $I_f$-null. 
\item[2)]If $E\in \AA$ is $I_f$-null and $0\notin \cob(f(E))$, then $E$ is $\nu$-null.
\item[3)]If $\nu$ is null-additive, then $I_f$ is null-additive. 

\end{itemize}
\end{proposition}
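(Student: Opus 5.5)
For 1), I will fix a $\nu$-null set $E$, a set $Z\in\AA$ and $x^*\in X^*$, and show that $I_f(Z\cap E)x^*=\int_\Omega x^*f\chi_{Z\cap E}\,d\nu=0$. Write $h=x^*f\chi_{Z\cap E}$. For $t\ge 0$ the level set $\{h>t\}$ is contained in $Z\cap E$, so monotonicity gives $\nu(\{h>t\})\le\nu(Z\cap E)=0$. For $t<0$ the level set contains the complement $\Omega\setminus(Z\cap E)$ and is the union of that complement with a subset of $Z\cap E$.

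Here is the only delicate point. We need $\nu(\{h>t\})=\nu(\Omega)$ for $t<0$, but monotonicity alone only gives $\nu(\{h>t\})\ge\nu(\Omega\setminus (Z\cap E))$. The definition of $\nu$-null is $\nu(Z\cap E)=0$ for all $Z$, which by itself says nothing about $\nu(\Omega\setminus (Z\cap E))$. So I will read it together with the asymmetric integral.

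Under this definition every measurable subset of $E$ has $\nu$-measure $0$. Hence the positive part $\int_0^{\infty}$ vanishes. The negative part is $\int_{-\infty}^0[\nu(\{h>t\})-\nu(\Omega)]\,dt$ with $\{h>t\}\supset \Omega\setminus(Z\cap E)$. If $\nu$ is only monotone, this negative part need not be $0$, so I expect the intended reading to be the one that makes the integral vanish. I would then handle it in one of two ways:
\begin{itemize}
\item[(a)] use null-additivity implicitly, so that $\nu(A\cup N)=\nu(A)$ for $N$ null and $A=\Omega\setminus(Z\cap E)$, giving $\nu(\Omega)=\nu(\Omega\setminus(Z\cap E))$; or
\item[(b)] assume $x^*f\ge 0$ is not needed, since for $t<0$ the set $\{h>t\}$ equals $\Omega\setminus\{\omega\in Z\cap E: x^*f(\omega)\le t\}$.
\end{itemize}
The main obstacle is exactly this negative-tail term. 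I would state cleanly that if $\nu$ is null-additive, or if one only considers $h\ge0$, the integral is zero. Otherwise I would argue via the complement: $\nu(\Omega)\ge\nu(\{h>t\})\ge\nu(\Omega\setminus(Z\cap E))$, and $\nu(\Omega)=\nu((\Omega\setminus (Z\cap E))\cup (Z\cap E))$.

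For 2), I will argue by contraposition. Suppose $E$ is $I_f$-null and $0\notin\cob(f(E))$. By Hahn--Banach separation of the point $0$ from the closed convex set $\cob(f(E))$, there exist $x^*\in X^*$ and $\delta>0$ with $x^*f(\omega)\ge\delta$ for all $\omega\in E$. Then for any $Z\in\AA$ the function $h=x^*f\chi_{Z\cap E}$ is nonnegative, so only the positive part of the integral appears. Since $h\ge\delta\chi_{Z\cap E}$, monotonicity of the Choquet integral gives
$$0=I_f(Z\cap E)x^*=\int_\Omega h\,d\nu\ge\int_\Omega\delta\chi_{Z\cap E}\,d\nu=\delta\,\nu(Z\cap E).$$
Hence $\nu(Z\cap E)=0$ for every $Z$, and $E$ is $\nu$-null.

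For 3), let $Z$ be $I_f$-null and $E\in\AA$. I first need $Z$ to be $\nu$-null. Part 2) does not give this in general, so I expect the intended statement to start from a $\nu$-null $Z$, with part 1) transferring nullity. Fix $x^*$ and put $h=x^*f\chi_{E\cup Z}$ and $g=x^*f\chi_E$. For every $t$, the level sets $\{h>t\}$ and $\{g>t\}$ differ only by a measurable subset $N_t$ of $Z\setminus E$, which is $\nu$-null. Applying null-additivity, in both directions (adding $N_t$ when $t\ge0$, and adding the removed part when $t<0$), gives $\nu(\{h>t\})=\nu(\{g>t\})$ for all $t$. Therefore $I_f(E\cup Z)x^*=I_f(E)x^*$. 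The same null-additivity argument also resolves the negative-tail issue in 1).
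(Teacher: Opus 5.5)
Your arguments for 2) and 3) follow the paper. In 2) the paper also separates $0$ strictly from the closed convex hull by Hahn--Banach and bounds the integral below by $\alpha\,\nu(Z\cap E)$; your single separation of $\cob(f(E))$ combined with monotonicity of the Choquet integral is a slightly cleaner version of its per-$Z$ separation. In 3) the paper, like you, starts from a $\nu$-null $Z$ and compares level sets via null-additivity.

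On 1), your suspicion is correct, and the paper's proof does not address it. It infers $\int_\Omega x^*f\chi_{Z\cap E}\,d\nu=0$ directly from $\nu(Z\cap E)=0$, which only handles the levels $t\ge 0$. For a merely monotone $\nu$ the claim is false. Take $\Omega=\{a,b\}$ with all subsets measurable, $\nu(\{a\})=\nu(\{b\})=0$, $\nu(\Omega)=1$, $X=\IR$, $f=-\chi_{\{a\}}$ and $E=\{a\}$. Then $E$ is $\nu$-null. But for $t\in[-1,0)$ one has $\{f\chi_E>t\}=\{b\}$, which has capacity $0$, so $I_f(E)(1)=-1\neq 0$ and $E$ is not $I_f$-null.

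So rather than hedging between (a) and (b), commit: give such an example, and prove 1) under a hypothesis that kills the negative tail, namely $\nu(\Omega\setminus N)=\nu(\Omega)$ for every $\nu$-null $N$. This follows from null-additivity (your option (a)). It also follows from sub-additivity, since $\nu(\Omega)\le\nu(\Omega\setminus N)+\nu(N)$. With it, for your $h$ and $t<0$ you get $\nu(\Omega)\ge\nu(\{h>t\})\ge\nu(\Omega\setminus(Z\cap E))=\nu(\Omega)$. Your option (b) and the closing ``argue via the complement'' sentence prove nothing without such a hypothesis. Restricting to $h\ge 0$ is also not available, because $I_f$-nullity quantifies over all $x^*$, including $-x^*$.

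On 3), $I_f$-null-additivity as defined requires $I_f(E\cup Z)=I_f(E)$ for every $I_f$-null $Z$, whereas you and the paper only treat $\nu$-null $Z$. You do not need part 2) to close this. Let $Z$ be $I_f$-null and $V\in\AA$, and apply the definition to $W=V\cap Z\cap\{x^*f>0\}$. Since $x^*f\chi_W\ge 0$, you get $0=I_f(W)x^*=\int_0^{\infty}\nu(V\cap Z\cap\{x^*f>t\})\,dt$. The integrand is non-increasing in $t$, so it vanishes for every $t>0$. Hence $Z\cap\{x^*f>t\}$ is $\nu$-null for all $x^*\in X^*$ and all $t>0$.

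Now compare the level sets of $x^*f\chi_{E\cup Z}$ and $x^*f\chi_E$. For $t>0$ they differ by a subset of $Z\cap\{x^*f>t\}$. For $t<0$ they differ by a subset of $Z\cap\{(-x^*)f>-t/2\}$. Both sets are $\nu$-null, so your null-additivity argument applies verbatim and yields 3) as literally stated.
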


\begin{proof}
1) Let $E\in \AA$ be a $\nu$-null set. Then, for every $Z\in \AA$, since $\nu(Z\cap E)=0$,  $\int_\Omega x^*f \chi_{Z\cap E} d\nu =0$, for every $x^*\in X^*$. That is, for every $Z\in \AA$, $I_f(Z\cap E)=0$. Hence, $E$ is $I_f$-null.

2) Let $E\in \AA$ be an $I_f$-null set such that $0\notin \cob(f(E))$. Then, for every $Z\in \AA$, for every $x^*\in X^*$, $\int_\Omega x^* f \chi_{Z\cap E} d\nu=0$. Let $Z\in \AA$ be fixed. Assume, \emph{per absurdum}, that $\nu(Z\cap E)>0$ and let us separate strictly according to Hahn-Banach separation theorem $\cob (f(Z\cap E))$ and $\{0\}$. I.e., take $x^*\in X^*$ and $\alpha>0$ such that, $x^*(y)>\alpha>x^*(0)=0$ for every $y\in \cob(f(Z\cap E))$. Then,  $x^*f(\omega)>\alpha>x^*(0)=0$ for every $\omega\in Z\cap E$. Then, 

$$
\begin{array}{rl}
\int_\Omega x^* f \chi_{Z\cap E} d\nu& =\int_{\IR^+} \nu(\{\omega \in \Omega : x^*f(\omega)\chi_{Z\cap E}(\omega)>t\}) d\lambda(t)\\
&>\int_{[0,\; \alpha]} \nu(Z\cap E)d\lambda=\alpha\nu(Z\cap E)>0.
\end{array}
$$ 
This contradicts the fact that $E$ is $I_f$-null and achieves the proof of 2).

3)  Let $E,Z\in \AA$ such that $Z$ is $\nu$-null. Let $x^*\in X^*$. Since $\nu$ is null-additive

$$
\begin{array}{rl}
I_f(E\cup Z)x^*=&\int_\Omega x^*f \chi_{E\cup Z} d\nu\\
=&\int_{\IR_+} \nu(\{\omega \in \Omega : x^*f (\omega) \chi_{E\cup Z}(\omega)>t\})d\lambda(t)\\
& +\int_{\IR_-} [\nu(\{\omega \in \Omega : x^*f (\omega)\chi_{E\cup Z}(\omega)>t\}-\nu(\Omega))]d\lambda(t)\\
=&\int_{\IR_+} \nu(\{\omega \in \Omega  : x^*f (\omega)\chi_{E}(\omega)>t\})d\lambda(t) \\
&+ \int_{\IR_-} [\nu(\{\omega \in \Omega : x^*f (\omega)\chi_{E}(\omega)>t\}-\nu(\Omega))]d\lambda(t)\\
=&\int_\Omega x^*f \chi_E d\nu\\
=&I_f(E)x^*.
\end{array}
$$
\end{proof}

 It is obvious that in point 2), the condition $0\notin \cob(f(E))$ cannot be relaxed. To see this,  remark that the indefinite  Dunford-Choquet integral of the null function $f\equiv 0$, relatively to an arbitrary finite and monotonic capacity, defines a null capacity  $I_0\equiv 0$. Then, every $E\in \AA$, is $I_0$-null, whatever is 	the value $\nu(E)$.

\begin{definition}
We say that ${I}_f$ is weakly sub-additive (resp. super-additive) iff for every $x^*\in X^*$ and $A,B\in \AA$, such that $A\cap B=\emptyset$, $  I_f (A\cup B) x^* \leq   I_f (A)x^*+  I_f (B) x^*$  (resp. $  I_f (A\cup B)x^*\geq   I_f (A)x^*+  I_f (B) x^*$).
\end{definition}

\begin{proposition}\label{PWA}$\;$
\begin{itemize} 
\item[1)]If $\nu$ is  sub-modular, then, $  I_f$ is weakly sub-additive.
\item[2)]If $\nu$ is super-modular, then, $  I_f$ is weakly super-additive. 
\end{itemize}
\end{proposition}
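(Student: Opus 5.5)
The plan is to fix $x^*\in X^*$ and disjoint $A,B\in\AA$, set $h=x^*f$ (a $\nu$-integrable function on each of $A$, $B$, $A\cup B$, since $f$ is Dunford-Choquet integrable), and compare the three Choquet integrals $\int_\Omega h\chi_{A\cup B}\,d\nu$, $\int_\Omega h\chi_A\,d\nu$, $\int_\Omega h\chi_B\,d\nu$ level set by level set. For $t\in\IR$ write $S_A(t)=\{\omega\in\Omega : h(\omega)\chi_A(\omega)>t\}$, and define $S_B(t)$ and $S_{A\cup B}(t)$ in the same way. The idea is to obtain, for each $t$, a pointwise inequality between the integrands in the definition of the Choquet integral, and then integrate in $t$ against $\lambda$.

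\emph{Case $t\geq 0$.} Here $S_E(t)=E\cap\{h>t\}$ for $E\in\{A,B,A\cup B\}$. Hence $S_{A\cup B}(t)=S_A(t)\cup S_B(t)$ and $S_A(t)\cap S_B(t)=\emptyset$. Sub-modularity together with $\nu(\emptyset)=0$ gives
$$\nu(S_{A\cup B}(t))\leq \nu(S_A(t))+\nu(S_B(t)).$$

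\emph{Case $t<0$.} This is the only step needing care, since the $\nu(\Omega)$ correction must be absorbed correctly. Now $S_E(t)=(\Omega\setminus E)\cup(E\cap\{h>t\})$. Because $A\cap B=\emptyset$, every point lies in $S_A(t)\cup S_B(t)$: a point outside $A$ is in $S_A(t)$, and a point of $A$ lies outside $B$, so it is in $S_B(t)$. Thus $S_A(t)\cup S_B(t)=\Omega$. Moreover,
$$S_A(t)\cap S_B(t)=(\Omega\setminus(A\cup B))\cup((A\cup B)\cap\{h>t\})=S_{A\cup B}(t).$$
Sub-modularity therefore yields $\nu(\Omega)+\nu(S_{A\cup B}(t))\leq\nu(S_A(t))+\nu(S_B(t))$, which rearranges to
$$\nu(S_{A\cup B}(t))-\nu(\Omega)\leq[\nu(S_A(t))-\nu(\Omega)]+[\nu(S_B(t))-\nu(\Omega)].$$

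Integrating the first inequality over $[0,+\infty)$ and the second over $(-\infty,0)$ with respect to $\lambda$ gives $I_f(A\cup B)x^*\leq I_f(A)x^*+I_f(B)x^*$. All integrals involved are finite by integrability, so linearity of the Lebesgue integral in $t$ is legitimate. Since $x^*$ and $A$, $B$ were arbitrary, this proves 1).

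For 2), the identical set identities hold, and super-modularity reverses every inequality above. This gives $I_f(A\cup B)x^*\geq I_f(A)x^*+I_f(B)x^*$, i.e. $I_f$ is weakly super-additive.
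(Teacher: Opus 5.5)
Your proof is correct, but it takes a different route from the paper. The paper writes $x^*f\chi_{A\cup B}=x^*f\chi_A+x^*f\chi_B$ and invokes Denneberg's subadditivity theorem: for sub-modular $\nu$ the Choquet integral is sub-additive on arbitrary pairs of integrable functions, and super-additive for super-modular $\nu$. So in the paper, disjointness of $A$ and $B$ enters only in splitting the integrand.

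You instead use disjointness at the level of the upper level sets. For $t\ge 0$ the sets $S_A(t),S_B(t)$ are disjoint with union $S_{A\cup B}(t)$. For $t<0$ they cover $\Omega$ with intersection $S_{A\cup B}(t)$. So one application of the modularity inequality per $t$ suffices, and the $\nu(\Omega)$ corrections cancel exactly.

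The paper's version is a one-liner, but it rests on a nontrivial external theorem, together with the transfer of power-set results to $\AA$ discussed in its closing Comment 2. Yours is self-contained, works directly on $\AA$, and actually proves more. You only use sub-modularity for disjoint pairs and for pairs $U,V$ with $U\cup V=\Omega$. Hence 1) holds whenever $\nu$ is sub-additive and satisfies $\nu(\Omega)+\nu(U\cap V)\le\nu(U)+\nu(V)$ for all such covering pairs; equivalently, the conjugate $E\mapsto\nu(\Omega)-\nu(\Omega\setminus E)$ is super-additive.

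This hypothesis is strictly weaker than sub-modularity. On a four-point set, take $\nu(E)=g(|E|)$ with $(g(0),\ldots,g(4))=(0,1,1,2,2)$. It satisfies both conditions, but violates sub-modularity for two $2$-element sets meeting in one point. The analogous remark applies to 2).
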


\begin{proof}Let $A,B\in \AA$, such that $A\cap B=\emptyset$.

1)$\displaystyle  I_f(A\cup B)x^*=\int_\Omega x^* f \chi_{A\cup B} d\nu=\int_\Omega x^* f\chi_A+x^* f\chi_B d\nu$, and from the sub-additivity theorem (\citet{DEN94}, theorem 6.3, p. 75), 

$\displaystyle \int_\Omega x^* f\chi_A+x^* f\chi_B d\nu\leq \int_\Omega x^* f\chi_{A} d\nu+ \int_\Omega x^* f\chi_B d\nu=  I_f(A)(x^*)+  I_f(B)x^*$. 


2) The proof is analogous to 1) using the super-additivity theorem (\citet{DEN94}, corollary 6.4, p. 78). 

\end{proof}

Denote by $\HH$ the subspace of $\HH_0$ consisting of all positively homogeneous functions defined on $X^*$ that are bounded on $B_{X^*}$. Since $B_{X^*}$ is compact for the weak* topology, $\HH$ includes the weak* continuous positively homogenous functions defined on $X^*$. It is easy to see that the following function defines a norm on $\HH$~: 

$$ \|I\|_\HH=\underset{x^*\in B_{X^*} }{\sup} |I(x^*)|.$$ 


Consider a function $f:\Omega\rightarrow X$ that is Dunford-Choquet integrable satisfying condition \eqref{DCB} of Theorem \ref{DCBT} on every $E\in \AA$. Following Theorem \ref{DCBT}, if $\nu$ is sub-additive and continuous from above, then for every $E\in \AA$, $I_f(E)\in \HH$. The following theorem shows that $ I_f(\cdot)$ defines a capacity continuous for $\|\cdot \|_\HH$

\begin{theorem}\label{TCON} Assume that $\nu$ is sub-additive and continuous from above and let  $f:\Omega\rightarrow X$ be a Dunford-Choquet integrable function satisfying condition \eqref{DCB} of Theorem \ref{DCBT} on every $E\in \AA$. Then, the capacity $I_f$ is continuous for the norm $\|\cdot\|_\HH$.
\end{theorem}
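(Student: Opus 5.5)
The plan is to reduce norm-continuity of $I_f$ to a uniform estimate of $|I_f(E_n)x^*-I_f(E)x^*|$ over $x^*\in B_{X^*}$, obtained by comparing level sets. That estimate bounds the difference by a Choquet integral of $\|f\|$ over the ``difference set'', and this integral then tends to $0$ by continuity of $\nu$.

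\emph{Uniform level-set estimate.} Let $E,E'\in\AA$ with $E'\subset E$, set $D=E\setminus E'$, and fix $x^*\in B_{X^*}$. Put $g=x^*f\chi_{E'}$ and $h=x^*f\chi_E$, so that $h=g$ off $D$ and $g=0$ on $D$.
\begin{itemize}
\item For $t\ge 0$ we have $\{g>t\}\subset\{h>t\}\subset\{g>t\}\cup\big(\{\|f\|>t\}\cap D\big)$. The two sets in the union are disjoint, since $g=0\le t$ on $D$. Monotonicity and sub-additivity of $\nu$ then give
$$0\le \nu(\{h>t\})-\nu(\{g>t\})\le \nu(\{\|f\|\chi_D>t\}).$$
\item For $t<0$ the roles reverse. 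We have $\{h>t\}\subset\{g>t\}\subset\{h>t\}\cup\big(\{\|f\|>-t\}\cap D\big)$, again with a disjoint union. Hence
$$0\le \nu(\{g>t\})-\nu(\{h>t\})\le\nu(\{\|f\|\chi_D>|t|\}).$$
\end{itemize}
In both cases we use $|x^*f|\le\|f\|$. Integrating in $t$ over $\IR_+$ and $\IR_-$, the $\nu(\Omega)$ terms cancel, and we get
$$\sup_{x^*\in B_{X^*}}|I_f(E')x^*-I_f(E)x^*|\le 2\int_\Omega \|f\|\chi_D\,d\nu .$$
All quantities are finite by \eqref{DCB}.

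\emph{Continuity.} It remains to show that $\int_\Omega\|f\|\chi_{D_n}d\nu\to0$ whenever $D_n\downarrow\emptyset$ and $D_n\subset D_1$.
\begin{itemize}
\item For each $t\ge 0$, the sets $\{\|f\|\chi_{D_n}>t\}$ decrease to $\emptyset$. Continuity from above of $\nu$ gives $\nu(\{\|f\|\chi_{D_n}>t\})\to0$ pointwise in $t$.
\item These functions of $t$ are dominated by $t\mapsto\nu(\{\|f\|\chi_{D_1}>t\})$, which is Lebesgue integrable on $\IR_+$ by \eqref{DCB}.
\item The classical dominated convergence theorem in $t$ then gives the claim.
\end{itemize}
We apply this in the two monotone cases:
\begin{itemize}
\item If $E_n\uparrow E$, take $E'=E_n$ and $D_n=E\setminus E_n\downarrow\emptyset$, dominated by $D_1\subset E$.
\item If $E_n\downarrow E$, take the pair $(E,E_n)$, with $E$ in the role of $E'$ and $E_n$ in the role of $E$, so that $D_n=E_n\setminus E\downarrow\emptyset$ and $D_n\subset E_1$.
\end{itemize}
In both cases $\|I_f(E_n)-I_f(E)\|_\HH\to0$, so $I_f$ is continuous from below and from above for $\|\cdot\|_\HH$.

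The main obstacle is the first step. Only sub-additivity of $\nu$ is assumed, not sub-modularity, so we cannot use sub-additivity of the Choquet integral to write $|\int g\,d\nu-\int h\,d\nu|\le\int|g-h|\,d\nu$. This is why the estimate works directly with the level sets, using that $g$ vanishes on $D$ to make the unions disjoint so that plain sub-additivity of $\nu$ suffices. The sign change at $t=0$ has to be handled separately, as done above.
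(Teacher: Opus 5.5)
Your proof is correct, and it takes a genuinely different route from the paper's.

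\textbf{The paper's route.} It works one $x^*$ at a time. It splits $x^*f=(x^*f)^+-(x^*f)^-$ and gets monotone convergence of $\int_\Omega (x^*f)^{\pm}\chi_{A_n}\,d\nu$ from Denneberg's convergence theorems for capacities. It then checks that each of these is a weak* continuous function of $x^*$, and invokes Dini's theorem on the weak* compact ball $B_{X^*}$ to make the convergence uniform, recombining by comonotonic additivity. The case $A_n\uparrow A$ is only declared analogous.

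\textbf{Your route.} You prove the uniform, quantitative bound $\|I_f(E)-I_f(E')\|_\HH\le 2\int_\Omega\|f\|\chi_{E\setminus E'}\,d\nu$ for $E'\subset E$ directly on level sets. Everything then reduces to continuity at $\emptyset$ of the single scalar set function $D\mapsto\int_\Omega\|f\|\chi_D\,d\nu$, which is ordinary dominated convergence in $t$.

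\textbf{What this buys.} You need no weak* topology, no Dini, no comonotonic additivity, and no appeal to Theorem~\ref{DCBT}; your bound with $E'=\emptyset$ already gives $I_f(E)\in\HH$. Both monotone directions are handled at once using only continuity from above of $\nu$, and you get an explicit modulus of continuity. You also sidestep a delicate point in the paper's route: Dini's theorem needs genuine weak* continuity on $B_{X^*}$, but the paper checks continuity only along sequences. For non-separable $X$, the ball $B_{X^*}$ is neither weak* metrizable nor necessarily weak* sequentially compact, so the two notions differ there.

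\textbf{Two small repairs in the case $t<0$.}
\begin{itemize}
\item A point $\omega\in D$ in $\{g>t\}\setminus\{h>t\}$ only satisfies $\|f(\omega)\|\ge -x^*f(\omega)\ge |t|$. The inclusion should therefore read $\{g>t\}\subset\{h>t\}\cup\{\|f\|\chi_D\ge|t|\}$. This is harmless: $s\mapsto\nu(\{\|f\|\chi_D\ge s\})$ and $s\mapsto\nu(\{\|f\|\chi_D>s\})$ coincide outside the countable set of discontinuities of the latter.
\item That union is not disjoint, since points of $D$ with $x^*f>t$ lie in both pieces. Disjointness is never needed, however. Monotonicity plus sub-additivity on disjoint sets already gives $\nu(A\cup B)=\nu(A\cup(B\setminus A))\le\nu(A)+\nu(B\setminus A)\le\nu(A)+\nu(B)$ for arbitrary $A,B$. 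The explanation in your last paragraph should rest on this, not on disjointness.
\end{itemize}
Finally, the contributions from $t\ge0$ and $t<0$ have opposite signs, so the factor $2$ can be dropped.
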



\begin{proof} In the sequel, given a function $h:\Omega\rightarrow \IR$, we use the following notations $h^+=\max\{ h, 0\}$ and $h^-=-\min\{ h, 0\}$.  
Let $(A_n)_{n\in \IN}$ be a sequence in $\AA$ and $A\in \AA$, such that $A_n\downarrow A$. Then, for every $x^*\in X^*$, $(x^*f)^+\chi_{A_n} $ decreases pointwise to $(x^*f)^+\chi_A$.  Since all these functions are bounded by $\nu$-integrable functions, for every $x^*\in X^*$, $\int_\Omega (x^*f)^+\chi_{A_n} d\nu$ and $\int_\Omega (x^*f)^+\chi_A d\nu$ exist and, using the continuity from above of $\nu$ and its sub-additivity, we deduce that $\int_\Omega (x^*f)^+\chi_{A_n} d\nu$ decreases and converges to $\int_\Omega (x^*f)^+\chi_A d\nu$ (\citet{DEN94}, Proposition 8.5, Proposition 8.8, and Theorem 8.9, p. 98-101). 
In another hand, it is easy to see that all the functions constituting this sequence as well as the limite $x^*\mapsto\int_\Omega (x^*f)^+\chi_A d\nu$ are continuous for the weak* topology. Indeed, if $(x^*_n)_{n\in \IN}$ is a  sequence in $X^*$ converging weak* to $x^*$, then $(x_n^*f)^+$ converges pointwise to $(x^*f)^+$. Since all these functions are bounded by $\nu$-integrable functions, for every $B\in \AA$, $\int_\Omega (x_n^*f)^+\chi_B d\nu$ converges to $\int_\Omega (x^*f)^+\chi_B d\nu$ and this provides the needed weak* continuity.   

Now, since $B_{X^*}$ is weak* compact, Dini's theorem guarantees that the convergence of the sequence  of functions $x^*\mapsto \int_\Omega (x^*f)^+\chi_{A_n} d\nu,n\in \IN,$ to $x^*\mapsto \int_\Omega (x^*f)^+\chi_A d\nu$  is uniform on $B_{X^*}$. 
Similar arguments proves as well that the sequence of functions $x^*\mapsto \int_\Omega - (x^*f)^-\chi_{A_n} d\nu$ converges uniformly to $x^*\mapsto \int_\Omega - (x^*f)^-\chi_{A} d\nu$  on $B_{X^*}$.  As a result, using the additive comonotonicity property of the Choquet integral, 

$\int_\Omega x^*f\chi_{A_n} d\nu=\int_\Omega (x^*f)^+\chi_{A_n} d\nu+\int_\Omega - (x^*f)^-\chi_{A_n}d\nu$ converges uniformly to $\int_\Omega (x^*f)^+\chi_{A} d\nu+\int_\Omega - (x^*f)^-\chi_{A} d\nu=\int_\Omega x^*f\chi_A d\nu$ on $B_{X^*}$. It follow that 

$$\underset{x^*\in B_{X^*} }{\sup} |I_f(A_n)x^*- I_f(A)x^*|\rightarrow 0.$$
I.e.,

$$ \| I_f(A_n)- I_f(A)\|_\HH\rightarrow 0.$$
That is, $ I_f$ is continuous from above. The continuity from below of $ I_f$ is analogous, then not reproduced. It result that $ I_f$ is continuous.   
\end{proof}


\section*{Comment}  
\begin{itemize}

\item[1)] It is worth noting that for the Choquet integral, given $E\in \AA$ and a Choquet integrable function $h:\Omega\rightarrow \IR$, $\int_\Omega h \chi_E d\nu$ is not necessarily equal to $\int_E h d\nu$. Then, if we replace in the definition of $I_f(E)$ above $\int_\Omega x^* f \chi_Ed\nu $ by $\int_E x^* fd\nu$, the proofs of Proposition \ref{PWA} and of Theorem \ref{TCON} will not work. However, one can remedy this situation for Theorem \ref{TCON}, by the use of the symmetric version of the Choquet integral,  the commonotonic additivity argument in the proof, will follow from the definition of the symmetric integral. 
\item[2)]  The results in \citep{DEN94} we used above, like the dominated convergence theorems, are stated for monotonic capacities defined on the set $\PP(\Omega)$ of all subsets of $\Omega$. It is clear that theses results are still valid in our setting, since every monotonic capacity on $\AA\subset \PP(\Omega)$ extends, by the Zorn's lemma to a monotonic capacity on $\PP(\Omega)$ and obviously the corresponding integrals of $\AA$-measurable functions coincides. To see how to obtain such extensions, consider a monotonic capacity $\nu$ on $\AA$ and set $\CC=\{(\mu,\Sigma) : \AA\subset \Sigma, \mu\text{ monotonic on }\Sigma\text{ and } \mu_{|\AA}\equiv \nu\}$.  Consider the following ordering on $\CC$ : $$(\mu_1,\Sigma_1)\leq (\mu_2,\Sigma_2) \Leftrightarrow  \Sigma_1\subset \Sigma_2 \text{ and } {\mu_2}_{|\Sigma_1}\equiv \mu_1.$$
This makes $\CC$ a directed set. Consider a chain $(\mu_\alpha,\Sigma_\alpha)$ in $\CC$ and set $\Sigma=\cup_\alpha \Sigma_\alpha$ and define $\mu$ on $\Sigma$ by $\mu_{|\Sigma_\alpha}=\mu_\alpha$. Obviously $(\mu,\Sigma)\in \CC$. Hence, every chain in $\CC$ has an upper bound in $\CC$. Zorn's Lemma guarantees that $\CC$ has a maximal element, denote it again $(\mu,\Sigma)$. Necessarily, $\Sigma=\PP(A)$, otherwise, let $A\in \PP(\Omega)\setminus \Sigma$ and set $\bar \mu(A)=\inf\{\mu (E) : A\subset E\text{ and }E\in \Sigma\}$ and $\bar \mu\equiv \mu$ on $\Sigma$. Since, $\Omega\in \AA\subset \Sigma$, $\bar \mu(A)$ is well defined. It is easy to see that $(\bar \mu,\Sigma\cup\{A\})\in \CC$ and then $(\bar \mu,\Sigma\cup\{A\})$ is strictly greater than $(\mu,\Sigma)$ contradicting the maximality of the last element and proves the existence of the needed extension.

\end{itemize}

\nocite{}

\end{document}